\newfont{\footsc}{cmcsc10 at 8truept}
\newfont{\footbf}{cmbx10 at 8truept}
\newfont{\footrm}{cmr10 at 10truept}
\newtheorem{theorem}{Theorem}
\newtheorem{lemma}[theorem]{Lemma}
\newenvironment{proof}[1][Proof]{\noindent{\textbf {#1}  }}  {\hfill$\Box$\bigskip}
\begin{document}

\title{The spectral radius of subgraphs of regular graphs}
\author{Vladimir Nikiforov\\Department of Mathematical Sciences, University of Memphis, \\Memphis TN 38152, USA}
\maketitle

\begin{abstract}
Let $\mu\left(  G\right)  $ and $\mu_{\min}\left(  G\right)  $ be the largest
and smallest eigenvalues of the adjacency matrix of a graph $G$. Our main
results are:

(i) Let $G$ be a regular graph of order $n$ and diameter $D.$ If $H$ is a
proper subgraph of $G,$ then%
\[
\mu\left(  G\right)  -\mu\left(  H\right)  >\frac{1}{nD}.
\]

(ii) If $G$ is a connected regular nonbipartite graph of order $n$ and
diameter $D$, then
\[
\mu\left(  G\right)  +\mu_{\min}\left(  G\right)  >\frac{1}{nD}.
\]

\textbf{Keywords:} \textit{smallest eigenvalue, largest eigenvalue, diameter,
connected graph, nonbipartite graph}

\end{abstract}

Our notation follows \cite{Bol98}, \cite{CDS80}, and \cite{HoJo88}. In
particular all graphs are defined on the vertex set $\left[  n\right]
=\left\{  1,...,n\right\}  $ and $\mu\left(  G\right)  $ and $\mu_{\min
}\left(  G\right)  $ stand for the largest and smallest eigenvalues of the
adjacency matrix of a graph $G$.

The aim of this note is to improve some recent results on eigenvalues of
subgraphs of regular graphs. Our main result is the following theorem.

\begin{theorem}
\label{th1}Let $G$ be a regular graph of order $n$ and diameter $D.$ If $H$ is
a proper subgraph of $G,$ then%
\begin{equation}
\mu\left(  G\right)  -\mu\left(  H\right)  >\frac{1}{nD}. \label{mainin}%
\end{equation}

\end{theorem}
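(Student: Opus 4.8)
Since $G$ is regular, say of degree $r$, we have $\mu(G)=r$, so the claim is that $\mu(H)<r-\tfrac1{nD}$. As $H$ is proper there is an edge $e=uv\in E(G)\setminus E(H)$, so $H$ is a subgraph of $G-e$ and $\mu(H)\le\mu(G-e)$; hence it suffices to prove $r-\mu(G-e)>\tfrac1{nD}$. Write $\lambda=\mu(G-e)$ and let $x\ge 0$ be a Perron eigenvector of $A_{G-e}$, chosen supported on a component $C$ of $G-e$ of maximal spectral radius (and zero off $C$), so that $A_{G-e}x=\lambda x$ holds on all of $[n]$. Note $\lambda<r$, since $C$ is connected with maximum degree $\le r$ but, containing an endpoint of $e$, is not $r$-regular.

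\textbf{Two identities and a Poincar\'e estimate.} First, computing $\mathbf 1^{T}A_Gx$ two ways — as $r\sum_i x_i$ (because $A_G\mathbf 1=r\mathbf 1$) and as $\mathbf 1^{T}A_{G-e}x+\mathbf 1^{T}(A_G-A_{G-e})x=\lambda\sum_i x_i+(x_u+x_v)$ — gives the exact gap formula
\[
(r-\lambda)\sum_i x_i=x_u+x_v .
\]
Second, with $L_G=rI-A_G$ the Laplacian of $G$, a one-line computation gives $\langle L_Gx,x\rangle=(r-\lambda)\|x\|_2^2-2x_ux_v$. Third, since $\langle L_Gx,x\rangle=\sum_{ij\in E(G)}(x_i-x_j)^2$ and $\operatorname{diam}(G)=D$, Cauchy--Schwarz along a shortest $G$-path gives $(x_a-x_b)^2\le D\,\langle L_Gx,x\rangle$ for all $a,b\in[n]$. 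These three facts are the engine.

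\textbf{The disconnected case ($G-e$ disconnected).} Here $e$ is a bridge, $|C|\le n-1$, one of $u,v$ lies outside $C$, so $x_ux_v=0$ and $\langle L_Gx,x\rangle=(r-\lambda)\|x\|_2^2$. Choosing $b\notin C$ (so $x_b=0$) in the Poincar\'e estimate yields $x_a^2\le D(r-\lambda)\|x\|_2^2$ for every $a$; summing over $a\in C$ gives $\|x\|_2^2\le|C|\,D(r-\lambda)\|x\|_2^2$, hence $r-\lambda\ge\frac1{|C|D}\ge\frac1{(n-1)D}>\frac1{nD}$.

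\textbf{The connected case.} This is the main obstacle. Now $x>0$ everywhere, so there is no zero coordinate to exploit. The first observation is that the minimum of $x$ is attained at $u$ or $v$: if the argmin $q$ had full degree $r$ in $G-e$, then $\lambda x_q=\sum_{j\sim q}x_j\ge rx_q$ would force $\lambda\ge r$. The plan is then to play the gap formula against the Poincar\'e estimate: if $r-\lambda$ were $\le\frac1{nD}$, the formula $(r-\lambda)\sum_i x_i=x_u+x_v$ would force $x_u,x_v$ (hence the minimum of $x$) to be small, while $\langle L_Gx,x\rangle\le r-\lambda$ would force $x$ to be almost flat everywhere by the Poincar\'e bound; normalizing $\|x\|_2=1$ and using $\max_a x_a\ge n^{-1/2}$, these two constraints should be incompatible. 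The delicate point — and the step I expect to require the most care — is to balance these estimates so that the constant comes out to exactly $1$; the naive forms of each inequality each lose a factor, so one must use the sharp identities simultaneously (keeping the $-2x_ux_v$ term in $\langle L_Gx,x\rangle$, using $x_ux_v\ge(\min(x_u,x_v))^2$, and using that $x$ has exactly $n$ positive entries, i.e.\ $\sum_a x_a^2=1$ over $n$ terms). Finally the case $D=1$, i.e.\ $G=K_n$, is settled by a direct computation of $\mu(K_n-e)$.
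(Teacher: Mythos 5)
Your reduction to $H=G-e$, the gap identity $(r-\lambda)\sum_i x_i=x_u+x_v$, the Laplacian identity $\langle L_Gx,x\rangle=(r-\lambda)\|x\|_2^2-2x_ux_v$, and the observation that the minimum of $x$ sits at $u$ or $v$ are all correct, and your treatment of the disconnected case is complete and in fact cleaner than the paper's (summing the Poincar\'e bound $x_a^2\le D\langle L_Gx,x\rangle$ over $a\in C$ even yields the slightly better constant $\frac{1}{(n-1)D}$). The problem is that the connected case --- which is the heart of the theorem --- is left as a plan, and the plan as stated does not close. Your own computation shows why: with $\|x\|_2=1$ and $\epsilon=r-\lambda\le\frac1{nD}$, the gap formula gives $\min(x_u,x_v)\le\frac{\epsilon\sqrt n}{2}$ and the Poincar\'e estimate gives $\max_a x_a\le\min(x_u,x_v)+\sqrt{D\epsilon}$, so the upper bound on $\max_a x_a$ is $\frac1{\sqrt n}\bigl(1+\frac1{2D}\bigr)$, which exceeds the lower bound $\frac1{\sqrt n}$ --- no contradiction. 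The sharper variants you list do not rescue this: keeping $-2x_ux_v$ and using $x_u^2+x_v^2\le\epsilon$ together with a single path of length $\le D$ from the maximal entry $w$ down to $\{u,v\}$ only yields $\epsilon\ge\frac{x_w^2}{D+1}\ge\frac{1}{n(D+1)}$, one edge short of the claimed constant.

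The missing idea is to run the path argument inside $H=G-uv$ rather than inside $G$, because only then does the leftover term $x_u^2$ in the identity $\sum_{ij\in E(H)}(x_i-x_j)^2+x_u^2+x_v^2=\epsilon$ act as a free extra ``edge to zero'': for a path of length $k-1$ in $H$ from $u$ to $w$ one gets $\frac{1}{k-1}(x_w-x_u)^2+x_u^2\ge\frac1k x_w^2$, which delivers $\frac1{nD}$ provided $k-1\le D-1$. But $\operatorname{dist}_H(w,u)$ need not be $\le D-1$, since removing $uv$ can increase distances; this is exactly where the paper needs its Lemma~\ref{le1}, which shows $\operatorname{dist}_H(w,u)+\operatorname{dist}_H(w,v)\le 2D$. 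That lemma splits the connected case into two subcases: either one of the two distances is $\le D-1$ (single-path argument above), or both equal $D$, in which case one must use both terms $x_u^2$ and $x_v^2$ and both paths simultaneously (they share a tail from their meeting point $z$ to $w$) to recover the constant $1$. Without this lemma and the two-path refinement, your approach proves a weaker inequality such as $\mu(G)-\mu(H)>\frac{1}{n(D+1)}$ but not the stated one.
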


This theorem combines the hypothesis of Theorem 4 in \cite{Nik07} with the
conclusion of Corollary 2.2 in \cite{Cio07}, thus improving both statements.
As a consequence we obtain the following result about regular nonbipartite graphs.

\begin{theorem}
\label{th2}If $G$ is a regular nonbipartite graph of order $n$ and diameter
$D$, then
\[
\mu\left(  G\right)  +\mu_{\min}\left(  G\right)  >\frac{1}{nD}.
\]

\end{theorem}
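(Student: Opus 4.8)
The plan is to deduce Theorem \ref{th2} from Theorem \ref{th1} by exhibiting a \emph{bipartite} proper subgraph $H$ of $G$ whose largest eigenvalue is at least $-\mu_{\min}(G)$. We may assume $G$ is connected, since otherwise $D=\infty$ and there is nothing to prove; write $d$ for the common degree, so that $\mu(G)=d$.

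First I would fix an eigenvector $\mathbf{x}=(x_1,\dots,x_n)$ of the adjacency matrix $A$ of $G$ belonging to $\mu_{\min}(G)$, and split the vertex set as $S=\{v:x_v\ge 0\}$, $T=\{v:x_v<0\}$. Let $H$ be the spanning subgraph of $G$ whose edges are precisely the edges of $G$ with one end in $S$ and the other in $T$. By construction $H$ is bipartite, hence it is a \emph{proper} subgraph of the nonbipartite graph $G$: since $G$ is not bipartite, the $2$-colouring $(S,T)$ leaves some edge of $G$ monochromatic, i.e.\ inside $S$ or inside $T$, and that edge is absent from $H$.

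The one substantive step is the estimate $\mu(H)\ge -\mu_{\min}(G)$, for which I would use the test vector $\mathbf{y}=(|x_1|,\dots,|x_n|)$, noting $\|\mathbf{y}\|=\|\mathbf{x}\|$. Every edge $uv$ of $H$ joins the two classes, so $x_ux_v\le 0$ and $|x_u||x_v|=-x_ux_v$, giving $\mathbf{y}^{T}A(H)\mathbf{y}=2\sum_{uv\in E(H)}|x_u||x_v|=-2\sum_{uv\in E(H)}x_ux_v$. On the other hand every edge of $G$ missing from $H$ lies inside $S$ or inside $T$, where $x_ux_v\ge 0$, so
\[
-\mathbf{x}^{T}A\mathbf{x}=-2\!\sum_{uv\in E(G)}\!x_ux_v\ \le\ -2\!\sum_{uv\in E(H)}\!x_ux_v=\mathbf{y}^{T}A(H)\mathbf{y}.
\]
The left-hand side equals $-\mu_{\min}(G)\,\|\mathbf{x}\|^{2}$ because $\mathbf{x}$ belongs to $\mu_{\min}(G)$, while the right-hand side is at most $\mu(H)\,\|\mathbf{y}\|^{2}=\mu(H)\,\|\mathbf{x}\|^{2}$; dividing by $\|\mathbf{x}\|^{2}>0$ yields the claim.

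Finally I would apply Theorem \ref{th1} to the pair $(G,H)$: since $G$ is regular of order $n$ and diameter $D$ and $H$ is a proper subgraph, $\mu(G)-\mu(H)>\tfrac{1}{nD}$, and therefore
\[
\mu(G)+\mu_{\min}(G)=\mu(G)-\bigl(-\mu_{\min}(G)\bigr)\ \ge\ \mu(G)-\mu(H)\ >\ \frac{1}{nD}.
\]
I expect the main obstacle to be conceptual rather than computational: recognizing that the sign pattern of a $\mu_{\min}$-eigenvector produces a bipartite subgraph that captures $-\mu_{\min}(G)$ in its Rayleigh quotient. Once that subgraph is chosen, the only points requiring care are the sign bookkeeping in the displayed inequality and the verification that $H$ is genuinely a proper subgraph, which is exactly where the nonbipartiteness hypothesis is used.
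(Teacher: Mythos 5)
Your proposal is correct and follows essentially the same route as the paper: both take the bipartite spanning subgraph $H$ determined by the sign pattern of a $\mu_{\min}(G)$-eigenvector, note that nonbipartiteness makes $H$ proper, and invoke Theorem \ref{th1}. The only cosmetic difference is that you prove $\mu(H)\ge -\mu_{\min}(G)$ directly via the test vector $(|x_1|,\dots,|x_n|)$, whereas the paper asserts $\mu_{\min}(H)\le\mu_{\min}(G)$ and uses the spectral symmetry of bipartite graphs; your version usefully spells out a step the paper leaves implicit.
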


\bigskip

To prove Theorem \ref{th1} we borrow ideas from \cite{Cio07} and \cite{Nik07}.

Write $dist_{G}\left(  u,v\right)  $ for the length of a shortest path joining
the vertices $u$ and $v$ in a graph $G.$ We start with a seemingly known lemma
that we prove here for convenience.

\begin{lemma}
\label{le1}Let $G$ be a graph of order $n$ and diameter $D.$ Let $uv$ be an
edge of $G$ such that the graph $G-uv$ is connected. Then, for every $w\in
V\left(  G\right)  ,$
\begin{equation}
dist_{H}\left(  w,u\right)  +dist_{H}\left(  w,v\right)  \leq2D. \label{in3}%
\end{equation}

\end{lemma}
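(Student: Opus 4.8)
\medskip
\noindent The plan is to set $H:=G-uv$ (which is connected, so all distances $dist_{H}(\cdot,\cdot)$ below are finite, and $dist_{H}\geq dist_{G}$ since $H\subseteq G$) and fix $w\in V(G)$.

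\emph{Reduction.} Deleting $uv$ cannot increase the $w$-distance to both $u$ and $v$ simultaneously: a shortest $w$--$u$ path of $G$ that uses $uv$ must end $\ldots,v,u$ and hence forces $dist_{G}(w,u)=dist_{G}(w,v)+1$, while a shortest $w$--$v$ path using $uv$ ends $\ldots,u,v$ and forces $dist_{G}(w,v)=dist_{G}(w,u)+1$; these cannot both hold. Since the conclusion is symmetric in $u,v$, we may assume $dist_{G}(w,u)\leq dist_{G}(w,v)$; then no shortest $w$--$u$ path of $G$ uses $uv$, so $dist_{H}(w,u)=dist_{G}(w,u)=:k\leq D$. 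If moreover some shortest $w$--$v$ path of $G$ avoids $uv$, then $dist_{H}(w,v)=dist_{G}(w,v)\leq D$ and $dist_{H}(w,u)+dist_{H}(w,v)\leq 2D$. So the only remaining case is that every shortest $w$--$v$ path of $G$ uses $uv$; each then ends $\ldots,u,v$, so $dist_{G}(w,v)=k+1\leq D$ (hence $k\leq D-1$) and $m:=dist_{H}(w,v)\geq k+2$. It remains to prove $k+m\leq 2D$.

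\emph{Main step.} Rerouting a shortest $w$--$v$ path of $G$ around the edge $uv$ through a $u$--$v$ path of $H$ is hopeless, since it costs an extra $dist_{H}(u,v)-1$ and $dist_{H}(u,v)$ can be of order $2D$. Instead, fix a shortest $w$--$v$ path $w=z_{0},z_{1},\ldots,z_{m}=v$ of $H$ (so $dist_{H}(w,z_{i})=i$ and $dist_{H}(z_{i},v)=m-i$) and consider the ``half-way'' vertex $z_{i_{0}}$ with $i_{0}:=\lfloor(k+m+1)/2\rfloor$, a legitimate index since $0\leq i_{0}\leq m$ (using $m\geq k+2$). The heart of the argument is the claim
\[
dist_{G}(w,z_{i_{0}})=i_{0}.
\]
Here ``$\leq$'' is immediate; for ``$\geq$'', take a shortest $w$--$z_{i_{0}}$ path $P$ of $G$; being simple, it traverses $uv$ at most once. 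If $P$ avoids $uv$ it lies in $H$, so $|P|\geq dist_{H}(w,z_{i_{0}})=i_{0}$. If $P$ traverses $uv$, cut it at that edge: if $P$ reaches $u$ before $v$ then its two pieces lie in $H$ and $|P|\geq dist_{H}(w,u)+1+dist_{H}(v,z_{i_{0}})=k+1+(m-i_{0})\geq i_{0}$, the last inequality being exactly $i_{0}\leq(k+m+1)/2$; if $P$ reaches $v$ before $u$ then $|P|\geq dist_{H}(w,v)+1=m+1>i_{0}$. Hence $|P|\geq i_{0}$ in every case, proving the claim. Feeding this into the diameter bound gives $i_{0}=dist_{G}(w,z_{i_{0}})\leq D$, i.e.\ $\lfloor(k+m+1)/2\rfloor\leq D$, which rearranges to $k+m\leq 2D$ --- the desired inequality.

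\emph{Main obstacle.} The whole weight of the proof sits on the identity $dist_{G}(w,z_{i_{0}})=i_{0}$, which combines two facts: that $uv$ is the only edge of $G$ absent from $H$, so any shortcut must pass through $uv$, and the precise ``half-way'' choice of $i_{0}$, which is exactly what makes such a shortcut no shorter than the route staying inside $H$. Once that is in place, the diameter hypothesis is used only once, on the single vertex $z_{i_{0}}$, and the bound follows by arithmetic.
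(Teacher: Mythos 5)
Your proof is correct and is essentially the paper's argument: both apply the diameter bound to a single ``half-way'' vertex at distance $\left\lfloor \left(dist_{H}\left(w,u\right)+dist_{H}\left(w,v\right)+1\right)/2\right\rfloor$ from $w$ along a shortest path toward $v$, and both split into the same three cases according to whether a shortest $G$-path to that vertex avoids $uv$, crosses it from $u$ to $v$, or from $v$ to $u$. Your preliminary reduction merely replaces the paper's branch-point decomposition of the two shortest paths with the direct bound $dist_{H}\left(w,u\right)=dist_{G}\left(w,u\right)$, a cosmetic simplification of the same idea.
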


\begin{proof}
Let $w\in V\left(  H\right)  $ and let $P\left(  u,w\right)  $ and $P\left(
v,w\right)  $ be shortest paths joining $u$ and $v$ to $w$ in $H.$ Let
$Q\left(  u,x\right)  $ and $Q\left(  v,x\right)  $ be the longest subpaths of
$P\left(  u,w\right)  $ and $P\left(  v,w\right)  $ having no internal
vertices in common. If $s\in Q\left(  u,x\right)  $ or $s\in Q\left(
v,x\right)  ,$ we obviously have%
\begin{equation}
dist_{H}\left(  w,s\right)  =dist_{H}\left(  w,x\right)  +dist_{H}\left(
s,x\right)  . \label{eq1}%
\end{equation}
The paths $Q\left(  u,x\right)  ,$ $Q\left(  v,x\right)  $ and the edge $uv$
form a cycle in $G;$ write $k$ for its length. Assume that $dist\left(
v,x\right)  \geq dist\left(  u,x\right)  $ and select $y\in Q\left(
v,x\right)  $ with $dist_{H}\left(  x,y\right)  =\left\lfloor k/2\right\rfloor
.$ Let $R\left(  w,y\right)  $ be a shortest path in $G$ joining $w$ to $y;$
clearly the length of $R\left(  w,y\right)  $ is at most $D.$ If $R\left(
w,y\right)  $ does not contain the edge $uv,$ it is a path in $H$ and, using
(\ref{eq1}), we find that%
\begin{align*}
D  &  \geq dist_{G}\left(  w,y\right)  =dist_{H}\left(  w,y\right)
=dist_{H}\left(  w,x\right)  +\left\lfloor k/2\right\rfloor \\
&  =dist_{H}\left(  w,x\right)  +\left\lfloor \frac{dist_{H}\left(
x,u\right)  +dist_{H}\left(  x,v\right)  +1}{2}\right\rfloor \\
&  \geq dist_{H}\left(  w,x\right)  +\frac{dist_{H}\left(  x,u\right)
+dist_{H}\left(  x,v\right)  }{2}=\frac{dist_{H}\left(  w,u\right)
+dist_{H}\left(  w,v\right)  }{2},
\end{align*}
implying (\ref{in3}). Let now $R\left(  w,y\right)  $ contain the edge $uv.$
Assume first that $v$ occurs before $u$ when traversing $R\left(  w,y\right)
$ from $w$ to $y.$ Then
\begin{align*}
dist_{H}\left(  w,u\right)  +dist_{H}\left(  w,v\right)   &  \leq
2dist_{H}\left(  w,x\right)  +dist_{H}\left(  x,u\right)  +dist_{H}\left(
x,v\right) \\
&  \leq2\left(  dist_{H}\left(  w,x\right)  +dist_{H}\left(  x,v\right)
\right)  <dist_{G}\left(  w,y\right)  \leq2D,
\end{align*}
implying (\ref{in3}). Finally, if $u$ occurs before $v$ when traversing
$R\left(  w,y\right)  $ from $w$ to $y,$ then
\begin{align*}
D  &  \geq dist_{G}\left(  w,y\right)  \geq dist_{H}\left(  w,u\right)
+1+dist_{H}\left(  v,y\right) \\
&  =dist_{H}\left(  w,x\right)  +dist_{H}\left(  x,u\right)  +1+dist_{H}%
\left(  v,y\right)  =dist_{H}\left(  w,x\right)  +\left\lceil k/2\right\rceil
\\
&  \geq dist_{H}\left(  w,x\right)  +\frac{dist_{H}\left(  x,u\right)
+dist_{H}\left(  x,v\right)  }{2}=\frac{dist_{H}\left(  w,u\right)
+dist_{H}\left(  w,v\right)  }{2},
\end{align*}
implying (\ref{in3}) and completing the proof.
\end{proof}

\bigskip

\begin{proof}
[\textbf{Proof of Theorem \ref{th1}}]Since $\mu\left(  H\right)  \leq
\mu\left(  H^{\prime}\right)  $ if $H\subset H^{\prime}$, we may assume that
$H$ is a maximal proper subgraph of $G$, that is to say, $V\left(  H\right)
=V\left(  G\right)  $ and $H$ differs from $G$ in a single edge $uv$. Write
$d$ for the degree of $G$ and set $\mu=\mu\left(  H\right)  $.

The rest of the proof is split into two cases: (a)\emph{ }$H$ connected; (b)
$H$ disconnected.\bigskip

\textbf{Case (a): }$H$\emph{ }\textbf{is connected.}

Let $\mathbf{x}=\left(  x_{1},...,x_{n}\right)  $ be a unit eigenvector to
$\mu$ and let $x_{w}$ be a maximal entry of $\mathbf{x};$ we thus have
$x_{w}^{2}\geq1/n.$ We can assume that $w\neq v$ and $w\neq u.$ Indeed, if
$w=v,$ we see that
\[
\mu x_{v}=%
{\textstyle\sum\limits_{vi\in E\left(  G\right)  }}
x_{i}\leq\left(  d-1\right)  x_{v},
\]
and so $d-\mu\geq1,$ implying (\ref{mainin}).

We have%
\[
d-\mu=d%
{\textstyle\sum\limits_{i\in V\left(  G\right)  }}
x_{i}^{2}-2%
{\textstyle\sum\limits_{ij\in V\left(  G\right)  }}
x_{i}x_{j}=%
{\textstyle\sum\limits_{ij\in V\left(  G\right)  }}
\left(  x_{i}-x_{j}\right)  ^{2}+x_{u}^{2}+x_{v}^{2}.
\]

Assume first that $dist_{H}\left(  w,u\right)  \leq D-1.$ Select a shortest
path $u=u_{1},u_{2},\ldots,u_{k}=w$ joining $u$ to $w$ in $H.$ We see that
\begin{align*}
d-\mu &  =%
{\textstyle\sum_{ij\in V\left(  G\right)  }}
\left(  x_{i}-x_{j}\right)  ^{2}+x_{u}^{2}+x_{v}^{2}>%
{\textstyle\sum_{i=1}^{k-1}}
\left(  x_{u_{i}}-x_{u_{i+1}}\right)  ^{2}+x_{u}^{2}\\
&  \geq\frac{1}{k-1}\left(  x_{u_{i}}-x_{u_{i+1}}\right)  ^{2}+x_{u}^{2}%
=\frac{1}{k-1}\left(  x_{w}-x_{u}\right)  ^{2}+x_{u}^{2}\geq\frac{1}{k}%
x_{w}^{2}\geq\frac{1}{nD},
\end{align*}
completing the proof.

Hence, in view of Lemma \ref{le1}, we shall assume that
\[
dist_{H}\left(  w,u\right)  =dist_{H}\left(  w,v\right)  =D.
\]

Select shortest paths $P\left(  u,w\right)  $ and $P\left(  v,w\right)  $
joining $u$ and $v$ to $w$ in $H.$ Let $Q\left(  u,z\right)  $ and $Q\left(
v,z\right)  $ be the longest subpaths of $P\left(  u,w\right)  $ and $P\left(
v,w\right)  $ having no internal vertices in common. Clearly $Q\left(
u,z\right)  $ and $Q\left(  v,z\right)  $ have the same length. Write
$Q\left(  z,w\right)  $ for the subpath of $P\left(  u,w\right)  $ joining $z$
to $w$ and let
\[
Q\left(  u,z\right)  =u_{1},\ldots,u_{k},\text{ \ \ }Q\left(  v,z\right)
=v_{1},\ldots,v_{k},\text{ \ \ }Q\left(  z,w\right)  =w_{1},\ldots,w_{l},
\]
where $u_{1}=u,$ $u_{k}=v_{k}=w_{1}=z,$ $w_{l}=w,$ $k+l-2=D.$ As above, we see
that
\begin{align*}
d-\mu &  \geq%
{\textstyle\sum_{i=1}^{k-1}}
\left(  x_{v_{i}}-x_{v_{i+1}}\right)  ^{2}+x_{v}^{2}+%
{\textstyle\sum_{i=1}^{k-1}}
\left(  x_{u_{i}}-x_{u_{i+1}}\right)  ^{2}+x_{u}^{2}+%
{\textstyle\sum_{i=1}^{l-1}}
\left(  x_{w_{i}}-x_{w_{i+1}}\right)  ^{2}\\
&  \geq\frac{2}{D-l+2}x_{z}^{2}+\frac{1}{l-1}\left(  x_{w}-x_{z}\right)
^{2}\geq\frac{2}{D+l-1}x_{w}^{2}\geq\frac{1}{Dn},
\end{align*}
completing the proof.\bigskip

\textbf{Case (b): }$H$\emph{ }\textbf{is disconnected.}

Since $G$ is connected, $H$ is union of two connected graphs $H_{1}$ and
$H_{2}$ such that $u\in H_{1},$ $v\in H_{2}.$ Assume $\mu=\mu\left(
H_{1}\right)  ,$ set $\left\vert H_{1}\right\vert =k$ and let $\mathbf{x}%
=\left(  x_{1},...,x_{k}\right)  $ be a unit eigenvector to $\mu.$ Let $x_{w}$
be a maximal entry of $\mathbf{x};$ we thus have $x_{w}^{2}\geq1/k.$ Like in
the previous case, we see that $w\neq u.$ Select a shortest path
$u=u_{1},u_{2},\ldots,u_{l}=w$ joining $u$ to $w$ in $H_{1}.$ Since
$dist_{G}\left(  v,w\right)  \leq diam$ $G=D,$ we see that $l=dist_{H_{1}%
}\left(  u,w\right)  \leq D-1.$ As above, we have%
\begin{align*}
d-\mu &  =%
{\textstyle\sum_{ij\in V\left(  G\right)  }}
\left(  x_{i}-x_{j}\right)  ^{2}+x_{u}^{2}+x_{v}^{2}>%
{\textstyle\sum_{i=1}^{l-1}}
\left(  x_{u_{i}}-x_{u_{i+1}}\right)  ^{2}+x_{u}^{2}\\
&  \geq\frac{1}{l-1}\left(  x_{u_{1}}-x_{u_{k}}\right)  ^{2}+x_{u}^{2}%
=\frac{1}{l-1}\left(  x_{w}-x_{u}\right)  ^{2}+x_{u}^{2}\geq\frac{1}{l}%
x_{w}^{2}>\frac{1}{nD},
\end{align*}
completing the proof.
\end{proof}

\bigskip

\begin{proof}
[\textbf{Proof of Theorem \ref{th2}}]Let $\mathbf{x}=\left(  x_{1}%
,...,x_{n}\right)  $ be an eigenvector to $\mu_{\min}\left(  G\right)  $ and
let $U=\left\{  u:x_{u}<0\right\}  .$ Write $H$ for the maximal bipartite
subgraph of $G$ containing all edges with exactly one vertex in $U;$ note that
$H$ is a proper subgraph of $G$ and $\mu_{\min}\left(  H\right)  <\mu_{\min
}\left(  G\right)  .$ Hence,%
\[
\mu\left(  G\right)  +\mu_{\min}\left(  G\right)  >\mu\left(  G\right)
+\mu_{\min}\left(  H\right)  =\mu\left(  G\right)  -\mu\left(  H\right)  ,
\]
and the assertion follows from Theorem \ref{th1}.
\end{proof}

\bigskip

\textbf{Acknowledgment }A remark of Lingsheng Shi triggered the present note.


\begin{thebibliography}{9}                                                                                                %


\bibitem {Bol98}B. Bollob\'{a}s, \emph{Modern Graph Theory}\textit{,} Graduate
Texts in Mathematics, 184, Springer-Verlag, New York (1998), xiv+394 pp.

\bibitem {CDS80}D. Cvetkovi\'{c}, M. Doob, H. Sachs, \emph{Spectra of Graphs,}
VEB Deutscher Verlag der Wissenschaften, Berlin, 1980, 368 pp.

\bibitem {Cio07}S. M. Cioab\u{a}, The spectral radius and the maximum degree
of irregular graphs, Electronic J. Combin., 14 (2007), R38.

\bibitem {HoJo88}R. Horn, C. Johnson, \emph{Matrix Analysis,} Cambridge
University Press, Cambridge, 1985, xiii+561 pp.

\bibitem {Nik07}V. Nikiforov, Revisiting two classical results on graph
spectra, Electronic J. Combin., 14 (2007), R14.
\end{thebibliography}
\end{document}